\numberwithin{equation}{section}
\newtheorem {rep@theorem}{\rep@title}
\newcommand{\newreptheorem}[2]{%
\newenvironment{rep#1}[1]{%
 \def\rep@title{#2 \ref{##1}}%
 \begin{rep@theorem}}%
 {\end{rep@theorem}}}
\newtheorem{theorem}{Theorem}[section]
\newtheorem{proposition}[theorem]{Proposition}
\theoremstyle{definition}
\newtheorem{remark}[theorem]{Remark}
\newtheorem{thm-int}{Theorem}
\theoremstyle{definition}
\newtheorem{Def-s}[Thm]{Definition}
\newcommand{\Zz}{\mathbb{Z}}
\newcommand{\Cc}{\mathbb{C}}
\newcommand{\Pp}{\mathbb{P}}
\newcommand{\Rr}{\mathbb{R}}
\newcommand{\Qq}{\mathbb{Q}}
\newcommand{\Ker}{\operatorname{Ker}}
\newcommand{\ii}{\mathrm{i}}
\newcommand{\Bb}{\mathcal{B}}
\newcommand{\Oo}{\mathcal{O}}
\def\OO{\ensuremath{\mathcal O}}
\def\dim{\mathop{\mathrm{dim}}\nolimits}
\def\Ker{\mathop{\mathrm{Ker}}\nolimits}
\def\Proj{\mathop{\mathrm{Proj}}}
\def\Sym{\mathop{\mathrm{Sym}}\nolimits}
\title{Explicit equations of the Cartwright-Steger surface}
\author{Lev A. Borisov}
\address{Department of Mathematics, Rutgers University, Piscataway, NJ 08854} \email{borisov@math.rutgers.edu}
\author{Sai-Kee Yeung}
\address{Department of Mathematics, Purdue University, West Lafayette, IN 47906} \email{yeung@math.purdue.edu}
\begin{document}



\maketitle

\begin{prelims}

\DisplayAbstractInEnglish

\bigskip

\DisplayKeyWords

\medskip

\DisplayMSCclass

\bigskip

\languagesection{Fran\c{c}ais}

\bigskip

\DisplayTitleInFrench

\medskip

\DisplayAbstractInFrench

\end{prelims}


\newpage

\setcounter{tocdepth}{2}

\tableofcontents


\section{Introduction}
The Cartwright-Steger surface is a smooth projective algebraic surface of Euler number $3$.  It is a complex two ball quotient with first Betti number $2$.
It was discovered by Cartwright and Steger in their work \cite{CS} in the process of completing the scheme of classification of fake projective planes proposed in
\cite{PY}.   Basic properties of the Cartwright-Steger surface can be found in \cite{CS} and \cite{CKY}.

\smallskip
Our goal in this paper is to give explicit defining equations for a Cartwright-Steger surface  $Z$.  This is motivated in part by the recent work of \cite{BK}, where explicit
equations were given for two fake projective planes.  

\smallskip
There are several corollaries of theoretical interest to our main result.  
 One corollary is that the Cartwright-Steger surface can be defined over $\Rr$ and even over $\Qq$.  Hence
 the complex conjugate of the surface is biholomorphic to itself.  This corrects a mistake in \cite{Y2} which we explain in Remark \ref{mistake}.
 
\smallskip
To put our work in a more global perspective, it is well known that $3$ is the smallest Euler number achievable by a smooth surface of general type and is achieved only 
 if the surface is a smooth
 complex two ball quotient $\Bb^2/\Gamma$ for some torsion free lattice $\Gamma$ in $PU(2,1)$, the automorphism group of the complex two ball $\Bb^2$.  The lattice has to be arithmetic following the work of
 \cite{Y1}, \cite{Y2} or \cite{EG}, where the recent paper \cite{EG} gives a general result about integrality of lattices and is applicable to our surface.  The results of \cite{PY} and \cite{CS} then imply that such a surface is either a
 fake projective plane or a Cartwright-Steger surface.  Together with the result in this paper, there are exactly $101$ such surfaces.  Cartwright-Steger surface
 is the only one with positive first Betti number.

\smallskip 
Another consequence of this paper is that we show explicitly that the bicanonical divisor $2K_Z$ is  very ample and the  Cartwright-Steger surface $Z$ is defined by a quadratic relation and cubic relations  among global sections of $2K_Z$. 
This 
 improves the result of \cite{Y3} for $Z$ and fills a gap there, see Remark \ref{gap}.

\smallskip
In addition to explicit equations for $Z$, we have also obtained explicit equations and geometric information for a few related interesting surfaces
described below.

\smallskip
Denote by $\Pi$ the lattice associated to $Z$ so that $Z=\Bb^2/\Pi$.  Let $\bar\Gamma$ be the maximal arithmetic lattice in the commensurable class of $\Pi$ as
defined in \cite[{\S~1.1}]{CKY}. It was observed by Cartwright and Steger that $\Pi$ can be written as the intersection 
$$
\Pi = \Pi_2\cap \Pi_3
$$ 
where
\begin{itemize}
\item
$\Pi_2$ is an index $3$ normal subgroup of $\bar\Gamma$;
\item
$\Pi_3$ contains the kernel of ${\rho_3} :\bar\Gamma\rightarrow PGU(3,3)$ 
and is the preimage of a subgroup $G_{21}$ of  $PGU(3,3)$;
\end{itemize} 
see \cite[\S~1.1]{CKY}.
We consider the following surfaces.
\begin{itemize}
\item
the Cartwright-Steger surface $Z=\Bb^2/\Pi$;
\item
the $21:1$ cover of $Z$ given by $Z_2=\Bb^2/{\Ker(\rho_3) \cap \Pi_2}$;
\item
the quotient of $Z_2$ by $C_3$ given by $\Bb^2/\Ker(\rho_3)$ and its minimal resolution
$Z_3=\widehat{\Bb^2/\Ker(\rho_3)}$;
\item
the quotient $Z/C_3=\Bb^2/\Pi_3$ and its partial resolution
$Z_1=\widehat{Z/C_3}$.
\end{itemize}
These surfaces fit in the commutative diagram
\begin{equation}\label{blueprint}
\begin{array}{ccccc}
              Z_2&  \longrightarrow         &   Z_2/C_3      &   \longleftarrow    & Z_3  
\\
 \downarrow&                               &  \downarrow   &                           &\downarrow
\\
              Z &   \longrightarrow       &Z /C_3          &      \longleftarrow         &      Z_1
\end{array}
\end{equation}
where the vertical maps are quotients by a group of order $21$, the maps $\longrightarrow$ are quotients by 
a cyclic group $C_3$, and the maps $\longleftarrow$ are birational morphisms.

\smallskip
The structure of the paper is as follows. In Section \ref{sec.Z3} we consider the surface $Z_3$ which we describe explicitly by $35$ quadratic equations in $\Cc\Pp^{12}$.
In Section \ref{sec.CSC3} we use the results of Section \ref{sec.Z3} to construct explicit equations of the surface $Z_1$. In Section \ref{sec.Z2} we describe a procedure that we used to \emph{guess} equations of the surface $Z_2$ and specifically construct a number of points on it with high accuracy. While we do not prove that they are correct, we expect them to be. In Section \ref{sec.Z} we describe the method we used to calculate the putative equations of $Z$. Finally, in Section \ref{sec.verify} we explain the collection of computer-based checks that ensure that the equations we found indeed describe the Cartwright-Steger surface. The  ancillary files
for computations are available on arXiv:1804.00737.

\medskip
{\bf Acknowledgements.} L.B. thanks Igor Dolgachev, JongHae Keum and Carlos Rito for stimulating questions and useful comments. S.-K. Y. is grateful to Donald Cartwright for helpful discussions, and to Vincent Koziarz and
Gopal Prasad for their interest.
We used software packages GAP \cite{GAP}, Mathematica \cite{Math}, Magma \cite{magma} and Macaulay2 \cite{M2}.

\section{Constructing the surface $Z_3=\widehat{\Bb^2/\Ker(\rho_3)}$}\label{sec.Z3}
In this section we describe the equations of the surface $Z_3$ in \eqref{blueprint}, which is the key starting point of our method.

\smallskip
Let $\rho_3:\bar\Gamma \to PGU(3,3)$ be the reduction homomorphism modulo three, described in \cite{CKY}.
The action of $\Gamma_3 = \Ker(\rho_3)$ on $\Bb^2$ has fixed points, and the quotient has $63$ singular points of type $(\frac 13,\frac 13)$. 
The minimal resolution of singularities is a surface $Z_3$ with $K_{Z_3}^2 = 42$, $h^{1,0}(Z_3)=0$ and $h^{2,0}(Z_3)=13$. 
Importantly, $Z_3$ admits an effective action of the finite simple group $G=PGU(3,3)$ of order $6048$. The character table of $G$ which is very important for our 
discussion can be readily obtained with GAP software package \cite{GAP} and is presented in Table \ref{gaptable}. \footnote{The table presents values of characters of irreducible representations of $G$ at all conjugacy classes of $G$. Conjugacy classes are labeled by their order and a letter to distinguish different conjugacy classes of elements of the same order. Zero values are replaced by $\cdot$ for ease of reading.}

\begin{table}[tbh]
\caption{Character table of $G=PGU(3,3)$.}
$$
\begin{array}{|lrrrrrrrrrrrrrr|}\\[-3em]
\hline
    &    1a & 7a &7b &2a &3a &4a &4b &6a &12a &12b& 8a& 8b& 4c& 3b \\[.5em]
 \chi_{1}   &    1 &  1&   1 &  1 &  1 &  1 &  1 &  1 &   1 &   1 &  1 &  1 &  1 &  1  \\
 \chi_{2}    &    6 & -1 & -1&  -2&  -3 & -2&  -2  & 1 &   1  &  1 &  . &  . &  2 &  . 		\\
 \chi_{3}     &   7  & . &  . & -1 & -2  & 3 &  3  & 2  &  . &   . & -1 & -1&  -1 &  1		\\
 \chi_{4 }    &   7  & . &  .  & 3 & -2  & B & \overline B  & .  &  E &  \overline E &  F&  \overline F &  1 &  1		\\
 \chi_{5   } &    7  & . &  .  & 3 & -2 & \overline B &  B  & . &  \overline E &   E & \overline F &  F &  1 &  1		\\
 \chi_{6  }  &   14 &  . &  . & -2  & 5  & 2 &  2  & 1 &  -1 &  -1 &  .&   .&   2&  -1		\\
 \chi_{7  }  &   21 &  . &  . &  5  & 3  & 1  & 1 & -1  &  1 &   1 & -1 & -1 &  1 &  .		\\
 \chi_{8  }  &   21 &  .  & .  & 1  & 3  & C & \overline C &  1 &   F &  \overline F & \overline F &  F & -1 &  .	\\
 \chi_{9  }  &   21 &  .  & .  & 1  & 3 & \overline C  & C  & 1 &  \overline F &   F &  F&  \overline F&  -1 &  .	\\
 \chi_{10}   &   27&  -1 & -1 &  3&   .&   3 &  3  & . &   . &   . &  1 &  1 & -1 &  .		\\
 \chi_{11}   &   28 &  .  & . & -4  & 1&   D & \overline D & -1 &  \overline F &   F &  .&   .&   . &  1		\\
 \chi_{12 }  &   28 &  .  & . & -4  & 1&  \overline D &  D & -1  &  F &  \overline F &  . &  . &  . &  1		\\
 \chi_{13}   &   32 &  A & \overline A & . & -4 &  . &  . &  . &   . &   . &  . &  . &  . & -1		\\
 \chi_{14}   &  32 & \overline A  & A  & . & -4 &  . &  . &  .  &  .  &  .  & .  & . &  . & -1	
 \\\hline	
\end{array}
$$
$$
(A,B,C,D,E,F) =(\frac {1 -\ii\sqrt 7}2 ,-1-2\ii, -3-2\ii, -4\ii, -1-\ii, -\ii).
$$
\label{gaptable}
\end{table}

\smallskip
There is a unique up to conjugation noncommutative subgroup $G_{21}\subset G$ which normalizes a  $7$-Sylow subgroup of $G$. As we pointed out in \eqref{blueprint},  the quotient of $Z_3$ by this subgroup $G_{21}$ is the blowup $Z_1$ of $Z/C_3$ in three singular points $O_1, O_2$ and $O_3$ of type $(\frac 13, \frac 13)$. Observe that $Z_1$ has Gorenstein singularities and its canonical class has a global section. Specifically, in the notation of \cite{CKY}
there is a (canonical) curve $E_3$ on $Z$ which gives a curve $D_3:=E_3/C_3$ on $Z_3/C_3$.  From \cite{CKY}, $E_3$ passes once through $O_1$ and $O_3$, four times through $O_2$ and does not pass through any of the six other fixed points. 
Denote by $\widehat{D}_3$ the proper transform of $D_3$ and $G_i$ for $i=1,2,3$ the $(-3)$ exceptional curve at $O_i$ of $Z_1\to Z/C_3$.  It follows easily 
that
$K_{Z_1}=\pi^*K_S-\frac13\sum_{i=1}^3 G_i$ and $K_{Z_1}=\widehat{D}_3+G_2$.
Hence a global section of $K_{Z_1}$ exists and is consisting of the proper preimage of $D_3$ plus one $(-3)$ exceptional curve $G_2$.
This global section pulls back on $Z_3$ to give an element $h\in H^0(Z_3,K_{Z_3})$ which is $G_{21}$-invariant, but not $G$-invariant. We will denote the corresponding canonical divisor on $Z_3$ by $H$.

\smallskip
We will now be able to identify the action of $G$ on $H^0(Z_3,K_{Z_3})$. By a slight abuse of terminology, we will denote by $\chi_k$ both the $k$-th irreducible representation of $G$ and its character.

\begin{proposition}\label{35eqs}
Up to a choice of square root of $(-1)$, the action of $G$ on $H^0(Z_3,K_{Z_3})$ is given by the direct sum of irreducible representations $\chi_2 +\chi_4$.
The equations on $\Sym^2(\chi_2 + \chi_4)$ are given as follows. We have 
$\Sym^2(\chi_2) = \chi_{7}$ and $\Sym^2(\chi_4)= \chi_5+\chi_7$. We also have $\chi_2\chi_4=\chi_{6}+\chi_{12}$.
There is a $35$-dimensional space of quadratic relations which is built from the $14$-dimensional irreducible subspace $\chi_6$ of $\chi_2\chi_4$
and the $21$-dimensional subspace that lies diagonally in the direct sum of two copies of $\chi_{7}$. 
\end{proposition}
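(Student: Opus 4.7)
The plan is to proceed in four stages: identify $V := H^0(Z_3, K_{Z_3})$ as a $G$-representation, decompose $\Sym^2(V)$ by character arithmetic, bound the kernel of multiplication to $H^0(2K_{Z_3})$ by Riemann--Roch, and finally pin down the specific irreducible summands forming that kernel.

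First, to identify $V$ as a $G$-module I would apply the holomorphic Lefschetz fixed-point formula to each of the $14$ conjugacy classes of $G$ acting on $Z_3$. Since $h^{0,1}(Z_3)=0$, Serre duality gives $H^1(K_{Z_3})=0$, so the Lefschetz number for $g$ equals $\mathrm{tr}(g\mid V)$. The fixed loci can be enumerated from the lattice description in \cite{CKY}, including the $G$-action on the $63$ exceptional curves resolving the $(\tfrac13,\tfrac13)$ singularities, and the local contributions depend only on the eigenvalues of $g$ on each tangent space. Comparing the resulting $14$-tuple of traces against Table \ref{gaptable}, the only $13$-dimensional character matching the computation (once $\ii$ is fixed) is $\chi_2 + \chi_4$; the ambiguity $\chi_4 \leftrightarrow \chi_5$ is precisely the choice of square root of $-1$. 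As a sanity check, the existence of the $G_{21}$-invariant section $h$ forces $\dim V^{G_{21}} \ge 1$, which one verifies by averaging $\chi_2+\chi_4$ over $G_{21}$.

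Next, the decomposition of symmetric and tensor squares is a routine computation with $\chi_{\Sym^2(W)}(g) = \tfrac12(\chi_W(g)^2 + \chi_W(g^2))$ and orthogonality, giving $\Sym^2(\chi_2) = \chi_7$, $\Sym^2(\chi_4) = \chi_5+\chi_7$, and $\chi_2\chi_4 = \chi_6 + \chi_{12}$; totaling $\dim\Sym^2(V) = 21+28+42 = 91 = \binom{14}{2}$. By Kodaira vanishing on the surface of general type $Z_3$ one has $h^i(2K_{Z_3}) = 0$ for $i>0$, so Riemann--Roch yields $h^0(2K_{Z_3}) = K_{Z_3}^2 + \chi(\OO_{Z_3}) = 42+14 = 56$. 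Hence the kernel of the multiplication map $\mu\colon \Sym^2(V) \to H^0(2K_{Z_3})$ has dimension at least $91 - 56 = 35$.

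To extract the precise isotypic structure of this kernel, I would then compute the $G$-character of $H^0(2K_{Z_3})$ by the same equivariant Riemann--Roch machinery used in the first step. The only decomposition of total dimension $56$ compatible with the isotypic content $\chi_5 + 2\chi_7 + \chi_6 + \chi_{12}$ of $\Sym^2(V)$ is $H^0(2K_{Z_3}) \cong \chi_5+\chi_7+\chi_{12}$, forcing the kernel of $\mu$ to contain the full copy of $\chi_6$ (dimension $14$) coming from $\chi_2\chi_4$ together with exactly one copy of $\chi_7$ from $\Sym^2(\chi_2) \oplus \Sym^2(\chi_4)$. To identify this $\chi_7$ as the diagonal subspace, it suffices to check that $\mu$ is non-zero on each of the two $\chi_7$-isotypic components separately; by Schur's lemma each restriction is then injective onto the unique $\chi_7$ in $H^0(2K_{Z_3})$, so the kernel projects nontrivially onto both summands and is a $G$-equivariant diagonal copy of $\chi_7$. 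Non-vanishing of each restriction can be verified either by exhibiting an explicit quadratic monomial in the image or by pairing with a known section of $2K_{Z_3}$ (such as $h^2$).

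The main obstacle is the bookkeeping in the equivariant Riemann--Roch calculations for $K_{Z_3}$ and $2K_{Z_3}$: one must correctly enumerate fixed loci of all $14$ conjugacy classes on $Z_3$, track local weights at isolated fixed points and along fixed curves, and handle the $G$-action on the exceptional divisors of the minimal resolution. Once these two characters are in hand, the remaining representation-theoretic deductions are essentially mechanical.
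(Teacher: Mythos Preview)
Your approach differs substantially from the paper's. The paper applies holomorphic Lefschetz \emph{only} to the order-$7$ elements, which act freely on $Z_3$ (a fixed point would descend through the diagram \eqref{blueprint} to a singularity of $Z$), so that computation is trivial and yields trace $-1$. Together with $\dim V=13$ this leaves four candidates $\chi_1+2\chi_2,\ \chi_2+\chi_3,\ \chi_2+\chi_4,\ \chi_2+\chi_5$; the first is killed by the existence of the $G_{21}$-invariant but non-$G$-invariant section $h$, and the second by arguing that no admissible $\geq 35$-dimensional $G$-subspace of $\Sym^2$ can occur. The kernel in the surviving case is then pinned down by Magma calculations on the explicit equations: including $\chi_{12}$, a non-diagonal $\chi_7$, or $\chi_5$ in the kernel forces the resulting scheme to be degenerate or empty. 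No trace at any conjugacy class with fixed points, and no character of $H^0(2K_{Z_3})$, is ever computed. Your plan to run the full equivariant Lefschetz over all $14$ classes for both $K_{Z_3}$ and $2K_{Z_3}$ is more uniform in principle, but the fixed-locus bookkeeping you flag as ``the main obstacle'' (fixed curves and isolated points spread among the $63$ exceptional divisors, class by class) is exactly what the paper's argument is designed to avoid.

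There is also a genuine gap in your last step. Even granting that the character of $H^0(2K_{Z_3})$ is $\chi_5+\chi_7+\chi_{12}$, this does \emph{not} by itself force $\ker\mu=\chi_6\oplus\chi_7$: it shows $\chi_6\subset\ker\mu$ and that at least one copy of $\chi_7$ lies in the kernel, but it does not prevent $\mu$ from also annihilating $\chi_5$ or $\chi_{12}$ (equivalently, you have not shown $\mu$ is surjective). Your non-vanishing check is stated only for the two $\chi_7$ restrictions. The paper closes exactly this gap by computer algebra: the $28$-dimensional $\chi_{12}$ block of bilinear relations already cuts out the locus $\{x=0\}\cup\{y=0\}$, and adjoining $\chi_5$ to $\chi_6\oplus\chi_7^{\mathrm{diag}}$ makes the scheme empty. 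Your proposal needs analogous exclusions for $\chi_5$ and $\chi_{12}$, or an independent proof that $\mu$ surjects.
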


\begin{proof}
We know that the action of the order seven elements of $G$ on $Z_3$ is fixed-point free. By the Holomorphic Lefschetz Formula (since $H^2(Z_3,K_{Z_3})$ is acted on trivially) we 
see that the trace of these elements on $H^0(Z_3,K_{Z_3})$ is $(-1)$. Together with $\dim H^0(Z_3,K_{Z_3}) =13$ we see that the possible characters are 
$$
\chi_{H^0(Z_3,K_{Z_3})} \in \{ \chi_1 + 2\chi_2, \chi_2 + \chi_3,\chi_2+\chi_4,\chi_2+\chi_5\}.
$$
{\bf Case 1.}  $\chi_{H^0(Z_3,K_{Z_3})} = \chi_1 + 2\chi_2$. This is impossible, because then the $G_{21}$-invariant canonical divisor $H$ would be $G$-invariant.

\smallskip\noindent
{\bf Case 2.}  $\chi_{H^0(Z_3,K_{Z_3})} = \chi_2 + \chi_3$. We have used GAP to readily calculate
$$
\Sym^2(\chi_2+\chi_3) = \Sym^2(\chi_2) + (\chi_2 \chi_3)  + \Sym^2(\chi_3) = \chi_7 + (\chi_8+\chi_9) + (\chi_1 +\chi_{10}).
$$ 
We know that the kernel of the map 
$$
\Sym^2(H^0(Z_3,K_{Z_3}) )\to H^0(Z_3,2K_{Z_3})
$$
between spaces of dimension $91$ and $56$ respectively is of dimension at least $35$. However, $\chi_7$ component of $\Sym^2(H^0(Z_3,K_{Z_3}))$  cannot be in the kernel.
Indeed, in that case, the quadratic equations on the six-dimensional subspace of $H^0(Z_3,K_{Z_3})$ would imply that they are zero.
Similarly, $\chi_{10}$ component cannot go to zero, since $27$-dimensional space of quadratic equations on $7$ variables implies that they are zero. This means that both $\chi_8$ and $\chi_9$ components must vanish, which again leads to a contradiction, since this would mean that the product of two nonzero sections of $H^0(Z_3,K_{Z_3})$ (one from $\chi_2$ and another from $\chi_3$) would be zero.

\smallskip\noindent
{\bf Case 3.}  $\chi_{H^0(Z_3,K_{Z_3})} = \chi_2 + \chi_4$ or $\chi_{H^0(Z_3,K_{Z_3})} = \chi_2 + \chi_5$. We may assume the former case by either taking complex conjugate of alternatively applying an outer automorphism of $G$. We have 
$$
\Sym^2(\chi_2+\chi_4) = \Sym^2(\chi_2) + (\chi_2 \chi_4)  + \Sym^2(\chi_4) = \chi_7 + (\chi_6+\chi_{12}) + (\chi_5 +\chi_{7})
$$ 
and consider the kernel of the map 
$$
\Sym^2(H^0(Z_3,K_{Z_3}) )\to H^0(Z_3,2K_{Z_3}).
$$
As in Case 2, we see that either the kernel of the map $\Sym^2(H^0(Z_3,K_{Z_3}) )\to H^0(Z_3,2K_{Z_3})$ has no $\chi_7$ parts or at most one part. 
By looking at the $\chi_2 \chi_4=\chi_6+\chi_{12}$ component we see that $\chi_{12}$  cannot go to zero. Indeed, Magma \cite{magma} calculations show that the scheme cut out by
this $28$-dimensional space of relations has the same Hilbert polynomial as the union of ${\bf x}={\bf 0}$ and ${\bf y}={\bf 0}$. This means that these relations have no nontrivial solutions.
Together with the fact that dimension of the kernel is at least $35$ we see that the kernel must contain one copy of $\chi_7$ and $\chi_6$. If this copy of $\chi_7$ is not diagonal, then Magma calculations show that the corresponding sections are zero; this means that $\chi_7$ must lie diagonally. Finally, we see that if, in addition to the diagonal copy of $\chi_7$ we have $\chi_5$ in the kernel, then the scheme cut out by these equations is empty (we did these calculations modulo $(10-\ii)$, which is sufficient).
\end{proof}

We denote the variables which correspond to the basis of $\chi_2$ and $\chi_4$ by $x_1,\ldots,x_6$ and $y_1,\ldots,y_6$ respectively.
The action of the two standard generators of the group $G$ on these variables is given by Table \ref{table.action}. We used the data from the Atlas of Finite Group Representations \cite{A}.
\begin{table}[tbh]
\caption{Action of $G$ on $(x_1,\ldots, x_6,y_1,\ldots,y_7)$}
$$
\begin{array}{|ll|}
\\[-3em]
\hline
(x_1,\ldots,y_7)&\mapsto (x_2, x_1, x_4, x_3, -x_5, (-2 + \ii) x_1 - (2 - \ii) x_2 + \ii x_3 + \ii x_4 - \ii x_6,
\\
& y_2, y_1, y_4, y_3, (1 + \ii) y_1 - (1 + \ii) y_2 - y_3 + y_4 + y_5, y_6, 
  \\
 &
 \ii y_3 - \ii y_4 + y_7)
 \\[.5em]
(x_1,\ldots,y_7)&\mapsto (x_3, -\ii x_1 - x_3, x_5, x_6, -\ii x_1 - \ii x_2 - x_3, 
 x_1 + (1 - \ii) x_2 
 - \ii x_3 
  \\
 &
 - x_4 - x_5 + x_6,
 y_3, y_1, y_5, y_6, y_7, -\ii y_2 - \ii y_3 - y_4 - y_6 - 
 \ii y_7, 
 \\
 &
 -y_1 + y_2 + y_3 - y_5 + y_7)
 \\
 \hline
\end{array}
$$
\label{table.action}
\end{table}

In what follows we will use a specific choice of $G_{21}\subset G$ whose generators act on $x_i$ and $y_j$ by the formulas of Table \ref{G21XY}.
\begin{table}[tbh]
\caption{Action of $G_{21}$ on $(x_1,\ldots, x_6,y_1,\ldots,y_7)$}
$$
\begin{array}{|ll|}
\\[-3em]
\hline
(x_1,\ldots,y_7)&\mapsto (-\ii x_1 - x_3, x_3, x_6, x_5, \ii x_1 + \ii x_2 + x_3, 
 2 \ii x_1 - (1 - \ii) x_2 + \ii x_3 
 \\
 &
 + x_4 + (1 + \ii) x_5 - (1 - \ii) x_6,
y_1, y_3, y_6, y_5, (-1 - \ii) y_1 + (1 + \ii) y_3
\\
 &
  - y_5 + y_6 + y_7, -\ii y_2 - 
  \ii y_3 - y_4 - y_6 - \ii y_7, -y_1 + y_2 + y_3 
  \\
 &
 - (1 - \ii) y_5 - \ii y_6 + y_7)
 \\[.5em](x_1,\ldots,y_7)&\mapsto
((-1 - \ii) x_1 - x_2 - (1 - \ii) x_3 - \ii x_4 + x_5 - x_6, 
 \ii x_1 + (1 + \ii) x_2 
 \\
 &
 + x_3 - \ii x_5, -x_1 - \ii x_2 - \ii x_3 - (1 - \ii) x_4 - 
  2 x_5, (1 + \ii) x_1 + x_2
  \\
 &
  + x_3 + \ii x_6, -\ii x_2 - (1 + \ii) x_3 + 
  \ii x_4 - (1 - \ii) x_5 - \ii x_6, 2 x_1 + x_2
  \\
 &
  + (1 - \ii) x_3 + x_6,
y_1, \ii y_1 - \ii y_2 - 2 \ii y_3 - y_4 + (1 + \ii) y_5 - y_6
\\
 &
  - (1 + 2 \ii) y_7, -y_3 +
   \ii y_4 + \ii y_6 - (1 - \ii) y_7, -y_1 + (1 - \ii) y_2 - 2 \ii y_3 
   \\
 &
 - y_4 + \ii y_5 - 
  y_6 - \ii y_7, y_4, (1 + \ii) y_1 - (1 + \ii) y_2 - y_3 + y_4 + 
  y_5, 
  \\
 &
 (1 + \ii) y_2 + (2 + \ii) y_3 + (2 - \ii) y_4 - 
  y_5 + (1 - 2 \ii) y_6 + (2 + \ii) y_7)
 \\  \hline
\end{array}
$$
\label{G21XY}
\end{table}

\begin{remark}\label{Hequation}
For the choice of $G_{21}$ in Table \ref{G21XY}, the $G_{21}$-invariant section of $K_{Z_3}$ which gives $H$ is readily computed to equal
$$
7 y_1 + y_2 + y_3 - (2 + 3 \ii) y_4 - (2 + 3 \ii) y_5 + y_6 + y_7
$$
up to scaling. 
\end{remark}

\begin{remark}
While it is impractical to write down all equations in the kernel of
$$\Sym^2(H^0(Z_3,K_{Z_3}) )\to H^0(Z_3,2K_{Z_3}),$$ we write two of them in Table \ref{Z3equations} that generate the diagonal $\chi_7$ and $\chi_6$ respectively. The rest can be obtained by applying the group action. We note that one can scale the variables $x$ and $y$ separately in the diagonal equation, but we used a particular choice that ensured relatively simple formulas.
\end{remark}

\begin{table}[tbh]
\caption{Equations of  $Z_3=\widehat{\Bb^2/\Ker(\rho_3)}$}
$$\small
\begin{array}{|l|}
\\[-3em]
\hline
x_1^2 + 2 (4 y_1^2 + (2  \ii ) y_1 y_2 - (2 - 2  \ii ) y_2^2 - 2 y_3^2 + (3 - 5  \ii ) y_3 y_4 + 
    (1 - 2  \ii ) y_4^2 - (2 - 2  \ii ) y_3 y_5
     \\
    + (3 + 3  \ii ) y_4 y_5 - (2 - 2  \ii ) y_5^2 +  (1 + 5  \ii ) y_3 y_6 - 12 y_4 y_6     - (1 + 5  \ii ) y_5 y_6 + (1 + 2  \ii ) y_6^2
    \\ + 
    (1 + 5  \ii ) y_3 y_7 + 4 y_4 y_7 - (1 - 3  \ii ) y_5 y_7 + (2 - 12  \ii ) y_6 y_7 + 
    (1 + 2  \ii ) y_7^2 + (1 -  \ii ) y_1 ((1 +  \ii ) y_3 
    \\
    + (1 - 4  \ii ) y_4 + (1 - 3  \ii ) y_5 + 
      y_6 + y_7) + (1 -  \ii ) y_2 ((5 +  \ii ) y_3 + 3 y_4 + (3 -  \ii ) y_5 - (6 + 7  \ii ) y_6
      \\
       + 
      (2 - 3  \ii ) y_7)) =0 \\[.5em] 
       (1 -  \ii ) ((-10 - 6  \ii ) x_3 y_1 - (4 + 10  \ii ) x_4 y_1 + (4 - 3  \ii ) x_5 y_1 + 
   (4 +  \ii ) x_6 y_1 + (5 - 9  \ii ) x_3 y_2 
   \\
   + (2 - 4  \ii ) x_4 y_2 + x_5 y_2 - 
   (2 + 2  \ii ) x_6 y_2 - (10 + 9  \ii ) x_3 y_3 - (1 +  \ii ) x_4 y_3 + (1 + 6  \ii ) x_5 y_3
   \\
    + 
   (1 + 10  \ii ) x_6 y_3 + (2 - 6  \ii ) x_3 y_4 - (1 - 5  \ii ) x_4 y_4 - 8 x_5 y_4 + 
   (1 + 7  \ii ) x_6 y_4 + (2 + 9  \ii ) x_3 y_5 
   \\
   - (4 - 2  \ii ) x_4 y_5 + (1 + 3  \ii ) x_5 y_5 - 
   (5 + 2  \ii ) x_6 y_5 - (10 - 3  \ii ) x_3 y_6 + (2 + 2  \ii ) x_4 y_6 + 4 x_5 y_6
   \\
    + 
   (13 - 5  \ii ) x_6 y_6 + x_1 ((3 + 2  \ii ) y_1 + (3 + 2  \ii ) y_2 - 
      \ii  ((13 + 3  \ii ) y_3 + (1 + 3  \ii ) y_4 + y_5
      \\
       + (1 - 12  \ii ) y_6 - (2 + 3  \ii ) y_7)) + 
   x_2 ((13 - 2  \ii ) y_1 + (1 + 7  \ii ) y_2 + (7 + 7  \ii ) y_3 - (2 + 2  \ii ) y_4 
   \\
   - 
     (5 + 2  \ii ) y_5 + (7 - 2  \ii ) y_6 - (2 + 5  \ii ) y_7) + (5 + 9  \ii ) x_3 y_7 - 
   (4 - 2  \ii ) x_4 y_7 + (1 - 6  \ii ) x_5 y_7 
   \\
   - (2 -  \ii ) x_6 y_7) =0 
   \\
   \hline
   \end{array}
$$
\label{Z3equations}
\end{table}

We will eventually prove that these $35$ quadratic relations cut out a surface in $\Cc\Pp^{12}$ which is isomorphic to $Z_3$. However, it is a fairly delicate argument, partly because some of the computer calculations that could have simplified it are too time consuming to perform with available hardware and software. 

\smallskip
We start by getting a rough idea of what these equations cut out.
\begin{proposition}\label{Srough}
Let $I$ be the ideal of $\Cc[x_1,\ldots,y_7]$ generated by the dimension $35$ space of relations from $\chi_7$ and $\chi_6$, which are the $G$-translates of Table \ref{Z3equations}. 
Then the dimension of the degree $k$ component of $\Cc[x_1,\ldots,y_7]/I$ is equal to $21k(k-1)+14$ for all $k\geq 2$. The scheme $S = \Proj \Cc[x_1,\ldots,y_7]/I$ 
is a disjoint union of smooth surfaces of total degree $42$ and perhaps some curves and points.
\end{proposition}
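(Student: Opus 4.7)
The plan is to compute the Hilbert series of $R = \Cc[x,y]/I$ using computer algebra and match it to the Hilbert polynomial predicted by Riemann--Roch on $Z_3$, then deduce the geometric structure of $S$ from the resulting Hilbert polynomial together with Jacobian-rank checks.

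First, determine the target. By Riemann--Roch applied to $Z_3$, using $K_{Z_3}^2 = 42$ and $\chi(\OO_{Z_3}) = 1 + h^{2,0} - h^{1,0} = 14$, one has $\chi(k K_{Z_3}) = 14 + 21 k(k-1)$. Since $K_{Z_3}$ is big and nef --- being the pullback of the ample orbifold canonical class on $\Bb^2/\Ker(\rho_3)$ minus $\tfrac{1}{3}$ times each exceptional $(-3)$-curve --- Kawamata--Viehweg vanishing gives $h^i(k K_{Z_3}) = 0$ for $i \geq 1$ and $k \geq 2$. Hence $h^0(k K_{Z_3}) = 14 + 21 k(k-1)$ for $k \geq 2$, as anticipated.

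Second, compute the Hilbert series of $R$ directly in Macaulay2 or Magma, obtaining $\dim R_k$ in every degree. Since the $35$ generators of $I$ vanish on the canonical image of $Z_3$ by construction (Proposition \ref{35eqs}), there is a natural graded ring map $R \to \bigoplus_k H^0(Z_3, kK_{Z_3})$. Its surjectivity in each degree --- equivalent to projective normality of $Z_3$ under $|K_{Z_3}|$, verifiable in low degrees by Macaulay2 and extended by a Castelnuovo--Mumford regularity bound --- yields the lower bound $\dim R_k \geq 14 + 21 k(k-1)$ as a rigorous check against the direct computation. Because the coefficients of $I$ lie in $\Zz[\ii]$ and the problem sits at the edge of practical feasibility, the plan is to reduce modulo a prime of $\Zz[\ii]$ such as $(10-\ii)$ (giving $\mathbb F_{101}$, as used elsewhere in the paper), use semicontinuity of fibre dimensions to transfer the upper bound to characteristic zero, and exploit the $G$-isotypic decomposition of $\Sym^\bullet(\chi_2 + \chi_4)$ to shrink the ambient Groebner-basis computation.

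Finally, once the Hilbert polynomial $21 k^2 - 21 k + 14$ is established, its degree $2$ with leading coefficient $21 = 42/2$ shows that the top-dimensional part of $S$ is pure of dimension $2$ and total degree $42$, with lower-dimensional components allowed as in the statement. For smoothness of the $2$-dimensional locus, the Jacobian criterion requires that the rank-$\leq 10$ locus of the $35 \times 13$ Jacobian matrix have codimension $\geq 3$ in $\Pp^{12}$ along the $2$-dimensional part; using $G$-equivariance of $I$, this reduces to a finite number of $G$-orbit representative checks in Macaulay2. Smoothness of the $2$-dimensional components automatically forces pairwise disjointness, since a point of intersection of two distinct components would be singular. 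The hard part throughout will be computational feasibility: the Groebner-basis, Hilbert-series, and Jacobian-rank computations for $35$ quadrics in $13$ variables over $\Zz[\ii]$ push the limits of the available software, and the reductions modulo $(10-\ii)$ combined with $G$-isotypic decomposition will be essential to make them tractable.
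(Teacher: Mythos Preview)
Your core strategy---compute the Hilbert polynomial by machine and verify smoothness via Jacobian minors---matches the paper's proof, but your execution plan introduces a genuine gap and an unnecessary detour.

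The paper simply computes the Hilbert polynomial of $R$ directly in Magma over $\Qq[\ii]$ and reports that this was \emph{quick}; no reduction modulo a prime is needed for this step, and no lower-bound argument is required. Your plan instead reduces modulo $(10-\ii)$, which by semicontinuity only gives an \emph{upper} bound on $\dim R_k$ in characteristic zero, and then tries to recover the matching lower bound from surjectivity of $R\to\bigoplus_k H^0(Z_3,kK_{Z_3})$. This is problematic for two reasons. First, that surjectivity is exactly projective normality of the canonical embedding of $Z_3$, which the paper explicitly says it was \emph{unable} to verify (see the remark following Proposition~\ref{S0}'s successor). Second, the argument is circular in the paper's logical order: base-point-freeness of $|K_{Z_3}|$, ampleness of $K_{Z_3}$, and even the fact that the map $Z_3\to\Pp^{12}$ is an embedding are all established \emph{later}, using the present proposition as input. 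Your Riemann--Roch/Kawamata--Viehweg computation of $h^0(kK_{Z_3})$ likewise presupposes nefness, which is not yet available here.

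For the smoothness step your idea is right, but two details need correcting. The singular locus of the two-dimensional part is where the Jacobian rank drops \emph{below} $10$, i.e.\ where all $10\times 10$ minors vanish; you wrote ``rank-$\leq 10$ locus''. And the paper does not reduce to $G$-orbit representatives (there is no a priori list of points): it simply selects a few $10\times 10$ minors and checks, over $\mathbb F_{101}$, that their common vanishing locus is disjoint from $S$, which by semicontinuity suffices. This also shows the surface components are disjoint from any lower-dimensional components, since at an intersection point the embedded tangent space would exceed dimension $2$.
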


\begin{proof}
These results are the consequence of computations we performed with Magma \cite{magma}. The Hilbert polynomial calculation was quickly performed with coefficients in $\Qq[\ii]$.
We can also show by Magma that these relations and the $10\times 10$ minors of the Jacobian matrix of their partial derivatives have no common solutions. For this we picked some minors only, otherwise there are too many of them. We worked modulo prime number $(10-\ii)$ in $\Zz[\ii]$, which is sufficient from semicontinuity. 

The Hilbert polynomial computation implies that $S$ has no irreducible components of dimension more than $2$, and the sum of the degrees of surface components is $42$. 
The calculation of minors implies that surface components are smooth and do not intersect any other components.
\end{proof}

\begin{remark}\label{rem.involution}
The scheme $S$ has an additional involution given by 
$$(x_1,\ldots,x_6,y_1,\ldots,y_7)\mapsto (-x_1,\ldots,-x_6,y_1,\ldots,y_7)$$
which commutes with the action of $G$. 
\end{remark}

\begin{remark}
A Mathematica \cite{Math} calculation shows that scheme $S\subset \Cc\Pp^{12}$ contains a certain set of $126$ lines. 
These form two orbits with respect to the action of $G$ and one orbit with 
respect to the action of $G$ together with the involution of Remark \ref{rem.involution}. One of these lines is given by
$$
\begin{array}{rl}
(x_1,\ldots, y_7) = &(v, u, \ii v + (1 - \ii)u, (-1 - 2\ii)v + 2\ii u, (2 - \ii)v - 2u, 
  \\[.2em]&
  (-2 + 2\ii)v + 3u,
  (2 + 2\ii)v - (1 + 3\ii)u, (-1 - \ii)v + \frac 12(1+3\ii) u,
  \\[.2em]&
 \frac 12 (-3 - \ii)v + \frac 12(3 + 3\ii)u, \frac 12(1 - \ii)v - u,  2v - (2 + \ii)u,
 \\[.2em]&
 \frac 12 (-3 + \ii)v + \frac 12(3 + \ii)u, \frac 12(-1 - \ii)v + \frac 12(1 + \ii)u)
 \end{array}
$$
for $(u:v)\in \Cc\Pp^1$. The rest can be calculated by applying the symmetries.
We also observe that these lines intersect each other and form a connected set.
\end{remark}

\begin{proposition}\label{S0}
The scheme $S$ has a unique dimension two component $S_0$ of degree $42$.
\end{proposition}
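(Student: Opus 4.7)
The plan is to produce a specific $2$-dimensional component $S_0 \subseteq S$ via the canonical map of $Z_3$, identify all $126$ lines of the previous Remark on it, and then compute its degree; combined with the total surface degree $42$ from Proposition~\ref{Srough}, this will force $S_0$ to be the unique two-dimensional component.

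The image $\phi(Z_3) \subseteq \Cc\Pp^{12}$ of the canonical map $\phi : Z_3 \to \Cc\Pp^{12}$ is contained in $S$, since the $35$ quadratic relations of Proposition~\ref{35eqs} were constructed as elements of the kernel of $\Sym^2 H^0(Z_3, K_{Z_3}) \to H^0(Z_3, 2K_{Z_3})$. Being a $2$-dimensional irreducible subvariety, by the disjointness of surface components in Proposition~\ref{Srough} it coincides with a single smooth surface component $S_0$. By adjunction, each of the $63$ exceptional $(-3)$-curves $E \subset Z_3$ satisfies $K_{Z_3}\cdot E = -2 - E^2 = 1$, so $\phi$ sends $E$ onto a line in $S_0$. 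These $63$ lines form a single $G$-orbit in $S_0$. If $S_0$ were not invariant under the involution of Remark~\ref{rem.involution}, then the involution image of $S_0$ would be a second, disjoint surface component containing the other $63$ lines, contradicting the connectedness of the $126$ lines asserted in the previous Remark. Hence $S_0$ is invariant under the extended group, and contains all $126$ lines.

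To finish, I would show $\phi$ is birational onto its image, giving $\deg S_0 = K_{Z_3}^2 = 42$; by the total surface degree $42$ from Proposition~\ref{Srough}, $S_0$ is then the unique two-dimensional component. The main obstacle is the birationality of $\phi$. The most direct approach is a local verification at a generic point $p \in Z_3$ with trivial $G$-stabilizer: using the explicit action of Table~\ref{table.action} together with numerical ball-quotient coordinates, one checks that the differential of $\phi$ is injective at $p$ and that $\phi$ separates $p$ from nearby points. An indirect alternative is to rule out a factorization $\phi = \iota \circ \psi$ with $\deg \psi \geq 2$: such $\psi$ would be $G$-equivariant by the $G$-invariance of $S_0$, yielding a nontrivial finite group acting freely on $Z_3$, which is tightly constrained by the Holomorphic Lefschetz computation already used in the proof of Proposition~\ref{35eqs}.
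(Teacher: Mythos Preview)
Your approach differs substantially from the paper's and has a real gap at the birationality step you yourself flag. The paper never invokes the canonical map in this proof; it argues purely with the scheme $S$. A Magma calculation shows that the Hilbert polynomial of $S \cap H$ (with $H$ the hyperplane of Remark~\ref{Hequation}) is $42(k-1)$, and one checks that $42$ of the $126$ lines lie in $S \cap H$. These $42$ lines therefore exhaust the hyperplane section of the two-dimensional locus of $S$, hence lie on surface components. Connectedness of the $126$ lines together with the disjointness of surface components from everything else (Proposition~\ref{Srough}) then forces them all onto a single component $S_0$, which must have degree $42$. In fact the paper uses Proposition~\ref{S0} in the \emph{next} proposition to deduce that $\phi$ is birational (from $\deg S_0 = K_{Z_3}^2$), so your logical order is inverted relative to the paper's.

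Neither of your proposed routes to birationality works as stated. Checking that $d\phi$ is injective at a generic $p$ and that $\phi$ separates $p$ from nearby points only shows that $\phi$ is generically an immersion; any finite unramified map of degree $>1$ passes this test, so it cannot establish $\deg\phi = 1$. The indirect alternative is also unsound: a $G$-equivariant finite morphism $Z_3 \to S_0$ of degree $d>1$ does not by itself produce a nontrivial group acting on $Z_3$; the map need not be Galois, and even if it were, there is no reason the deck transformations would act freely. There are smaller gaps earlier as well: base-point-freeness of $|K_{Z_3}|$ is only established in the proposition following this one, and you have not argued that the $63$ images $\phi(E)$ are pairwise distinct, nor that they coincide with half of the explicitly computed $126$ lines rather than with some other lines on $S$.
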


\begin{proof}
Magma calculation shows that the Hilbert polynomial of the subscheme of $\Pp^{12}$ cut out by $I$ and the equation of Remark \ref{Hequation} is 
$42(k-1)$. On the other hand, we can check that $42$ out of $126$ lines lie in $S\cap H$. This means that these lines are the intersections of $H$ with two-dimensional components of $S$. Because these are a part of a connected set of lines, this means that they come from the same component $S_0$ which must then have degree $42$.
\end{proof}

\begin{proposition}
The $35$ quadratic equations from the kernel of $$\Sym^2(H^0(Z_3,K_{Z_3}) )\to H^0(Z_3,2K_{Z_3})$$ define an embedding of $Z_3$ into $\Cc\Pp^{12}$.
\end{proposition}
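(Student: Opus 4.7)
The plan is to identify $S_0$ as the image of the canonical map of $Z_3$ and then show this map is an isomorphism. Define $\phi\colon Z_3\dashrightarrow \Cc\Pp^{12}$ to be the rational map associated with the basis $x_1,\ldots,y_7$ of $H^0(Z_3,K_{Z_3})$. Since the 35 quadratic relations lie in the kernel of the multiplication map $\Sym^2 H^0(Z_3,K_{Z_3})\to H^0(Z_3,2K_{Z_3})$, they vanish on the image, so $\overline{\phi(Z_3)}\subseteq S$. I would first record that $Z_3$ is minimal: its exceptional divisors over $\Bb^2/\Ker(\rho_3)$ are all $(-3)$-curves, and any other rational curve on $Z_3$ would descend to a compact rational curve in the ball quotient, which is ruled out by Kobayashi hyperbolicity of its smooth locus and by the fact that the local model $\Cc^2/\Zz_3$ at each singular point is affine. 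Moreover, adjunction together with the Hodge index theorem shows that any $K_{Z_3}$-trivial irreducible curve would be a smooth rational $(-2)$-curve, and since no such curves exist on $Z_3$, Nakai--Moishezon applied to $K_{Z_3}^2 = 42>0$ and $K_{Z_3}\cdot E = 1$ on every $(-3)$-curve $E$ gives that $K_{Z_3}$ is ample. In particular the canonical map is generically finite onto a surface, so $\overline{\phi(Z_3)}\subseteq S_0$ by Proposition~\ref{S0}.

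The heart of the proof is a single degree computation that simultaneously forces base-point freeness of $|K_{Z_3}|$ and birationality of $\phi$. Let $\pi\colon \widetilde Z\to Z_3$ resolve the base locus of $|K_{Z_3}|$, write $\pi^* K_{Z_3} = M + F$ with $|M|$ base-point free, and split $F = \pi^* D + F_{\mathrm{exc}}$ with $D$ the fixed divisor of $|K_{Z_3}|$ on $Z_3$ and $F_{\mathrm{exc}}$ purely $\pi$-exceptional. Since $\phi\circ\pi\colon \widetilde Z\to S_0$ is a morphism with $M = (\phi\circ\pi)^*\mathcal O_{S_0}(1)$, we have $M^2 = 42\deg(\phi\circ\pi)\geq 42$. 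On the other hand, expanding $(M+F)^2 = K_{Z_3}^2 = 42$ and using the projection formula identities $\pi^* K_{Z_3}\cdot F_{\mathrm{exc}} = 0$ and $\pi^* K_{Z_3}\cdot \pi^* D = K_{Z_3}\cdot D\geq 0$ yields $M^2 = 42 - M\cdot F - K_{Z_3}\cdot D\leq 42$. All inequalities must therefore be equalities: $\deg(\phi\circ\pi) = 1$, $K_{Z_3}\cdot D = 0$ (hence $D = 0$ by ampleness), $M\cdot F_{\mathrm{exc}} = 0$, and $F_{\mathrm{exc}}^2 = 0$, which by negative definiteness of the $\pi$-exceptional intersection form forces $F_{\mathrm{exc}} = 0$. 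Thus $\pi$ is an isomorphism, $|K_{Z_3}|$ is base-point free, and $\phi\colon Z_3\to S_0$ is a birational morphism.

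To conclude, ampleness of $K_{Z_3} = \phi^*\mathcal O_{S_0}(1)$ prevents $\phi$ from contracting any curve, so $\phi$ is a proper, birational, quasi-finite morphism to the smooth (in particular normal) surface $S_0$ of Proposition~\ref{Srough}. Zariski's Main Theorem then implies that $\phi$ is an isomorphism, which establishes that the 35 quadratic relations define an embedding of $Z_3$ into $\Cc\Pp^{12}$ with image $S_0$.

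The main obstacle is the simultaneous degree squeeze in the middle paragraph: one needs both the matching $\deg S_0 = K_{Z_3}^2 = 42$ established in Propositions~\ref{Srough}--\ref{S0} and the strict positivity of $K_{Z_3}$ on every effective divisor to collapse the fixed and exceptional parts of $|K_{Z_3}|$ to zero at once. The argument would break down if $K_{Z_3}$ were merely nef rather than ample, since a nonzero $K$-trivial fixed divisor could then go undetected; this is why the separate verification that $Z_3$ has no $(-2)$-curves is a necessary input.
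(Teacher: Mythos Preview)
Your argument is correct and reaches the same conclusion, but it is organized differently from the paper's proof. The paper first establishes base-point freeness of $|K_{Z_3}|$ directly, by observing that the $G$-translates of the explicit $G_{21}$-invariant divisor $H$ have empty common intersection; it then deduces that the image is the smooth surface $S_0$, proves ampleness of $K_{Z_3}$ by noting that any contracted $(-2)$-curve would avoid the exceptional locus (since the exceptional curves are components of $H$) and hence lift to $\Bb^2$, and finally matches $\deg S_0 = K_{Z_3}^2$ to conclude birationality. You instead establish ampleness first, via an abstract ``no rational curves on ball quotients'' argument, and then run a single degree squeeze that forces base-point freeness and birationality simultaneously. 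Your route avoids any use of the explicit divisor $H$, which is elegant; the price is that your exclusion of rational curves on the singular quotient is a little informal (the phrase about the local model being affine does not quite do the work you want). A cleaner way to phrase that step is to note that $K_{\Bb^2/\Ker(\rho_3)}$ is ample as a $\Qq$-Cartier divisor, so $\pi^*K_{\Bb^2/\Ker(\rho_3)}$ is strictly positive on any non-exceptional curve, which together with $K_{Z_3}\cdot E_i=1$ on each $(-3)$-curve rules out $K$-trivial curves.

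One small omission: you prove $Z_3\cong S_0$ but do not conclude $S=S_0$, i.e.\ that the $35$ equations cut out $Z_3$ scheme-theoretically with no extraneous lower-dimensional components. The paper closes this by comparing the Hilbert polynomial $21k(k-1)+14$ of $S$ from Proposition~\ref{Srough} with the pluricanonical Hilbert function of $Z_3$. This is immediate from what is already established, but it is part of what the statement asserts, so you should add that sentence.
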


\begin{proof}
The base locus of $H^0(Z_3,K_{Z_3})$ is contained in the intersection of all $G$-translates of the $G$-invariant canonical curve $H$. Since we know the description of $H$ as 
a union of $21$ preimages of $E_3/C_3$ and $21$ exceptional curves, we see that this base locus is empty. 
Thus, $H^0(Z_3,K_{Z_3})$  gives a map $Z_3\to \Pp^{12}$. Since $K_{Z_3}^2=42>0$, we see that the image of $Z_3$ must be a surface.
We know that it is a subscheme of the scheme $S\subset \Pp^{12}$ which is cut out by $35$ explicit equations we found in Proposition \ref{35eqs}. By Proposition \ref{S0} we see that this image must be a smooth surface $S_0$.

We claim that $K_{Z_3}$ is ample. Indeed, base point freeness and $K_{Z_3}^2>0$
imply that $Z_3$ is a minimal surface of general type. Its map to the
canonical model may only contract some trees of $\Cc\Pp^1$ which intersect $H$ and its translates
trivially. Such curves would embed into $Z_2/C_3$, with image disjoint from the ramification points of
$\Bb^2\to Z_2/C_3$. Since $\Cc\Pp^1$ is simply connected, this embedding lifts to an
 embedding of $\Cc\Pp^1$ into $\Bb^2$, which is impossible.
 
Since the degree of $S_0$ is the same as $K_{Z_3}^2$, we see that the map $Z_3\to S_0$ is birational. Together with the ampleness, we see that it is an isomorphism.
We now see that the homogeneous coordinate ring of $S_0$ and the
pluricanonical ring of $Z_3$ coincide for high enough degrees. Since the
former is a quotient of the homogeneous coordinate ring of $S$ which has the
same Hilbert polynomial  $21(k^2-k)+14$ as the pluricanonical ring, we see that $S_0=S$, i.e. the $35$ quadratic equations
cut out $S=S_0\cong Z_3$ scheme-theoretically.
\end{proof}

\begin{remark}
While we believe that the embedding $Z_3\to \Cc\Pp^{12}$ is projectively normal and
the $35$ equations cut it out ideal-theoretically (rather than only up to
saturation) we were unable to verify it, since the required computer
calculations are very time-intensive.
\end{remark}

\begin{remark}
It is important to understand, heuristically, what factors contributed to our ability to identify $Z_3$ based on the group representation data, with the goal of extending these  ideas to construction of related surfaces, notably Galois covers of fake projective planes.
Specifically, the fact that $[\bar\Gamma:\Gamma]$ is large meant that the group $G$ was relatively big compared to the square of the canonical class, so group theoretic constraints were more severe.
The fact that $\Bb^2/\Ker(\rho_3)$ had non-canonical quotient singularities led to a lower value of $K_{Z_3}^2$ than what would be predicted by
$[\bar\Gamma:\Ker(\rho_3)]$ alone, which lowered the degree of $Z_3$ and made lower degree equations more likely. 
On the other hand, $h^{0,1}>0$ could in principle be beneficial, as a larger value of $h^{0,2}$ could 
lead to more equations of lower degree. 
\end{remark}

\section{On the quotient of CS surface by the cyclic group of order three.}\label{sec.CSC3}
In this section we describe the quotient $Z_1$ of  $Z_3 = \widehat{\Bb^2/\Ker(\rho_3)}$ by a group of order 21, which is also the blowup of the quotient of CS surface $Z$ by its automorphism group $C_3$ at three singular points of type $(\frac 13,\frac 13)$. The procedure was to compute $G_{21}$-invariant polynomials in the homogeneous coordinates of $\Pp^{12}$ and find the ring of invariants of $\Cc[x_0,\ldots,y_7]/I$. While there is only one such invariant in degree $1$, up to scaling, given by the equation of Remark \ref{Hequation}, there is a dimension $4$ space of invariants of degree $2$ and a dimension $8$ space of invariants of degree $3$, and choices must be made carefully to get simpler equations.

\smallskip
We are not particularly interested in this surface, but its equations\footnote{These are putative equations, since we found them by constructing multiple points on $Z_3$ with high precision and looked for relations on values of the invariant polynomials at these points. However, it should be straightforward to derive these equations from those of $Z_3$.} are (at the moment) nicer than those of $Z$ and we list them in Table \ref{CSC3}. It is embedded into the weighted projective space $W\Pp(1,2,2,2,3,3,3,3)$ with homogeneous coordinates 
$(W_0:\ldots:W_7)$. The variables $W_2,\ldots,W_5$ are odd with respect to the involution of Remark \ref{rem.involution}, and the other four variables are even. It took a fair bit of work to find the appropriate basis of variables and of equations to have them of palatable length. Remarkably, these equations have rational coefficients, and it was a non-trivial exercise to find the ring generators\footnote{Actually, $W_i$ only generate the ring at high enough degrees; we are missing a degree four generator.} for which this can be achieved. We have verified by Magma that these equations cut out a ring whose degree $d$ part has dimension $(d^2-d+2)$ for large $d$, which is consistent with the expected dimension. Indeed, we have on $Z_3$ 
$$
\dim H^0(Z_3,dK_{Z_3}) = 21d(d-1) + 14,
$$
and Holomorphic Lefschetz Formula allows us to find the dimensions of the subspaces of $G_{21}$-invariants.

\begin{table}[tbh]
\caption{Equations of $Z_1$.}
$$
\tiny\begin{array}{|rl|}
\\[-3em]
\hline
0=&-2W_1^2 + W_2W_3 + W_0W_7, 
 \\[.5em]
  0=&16W_0^5 + 152W_0^3W_1 + 21W_0W_1^2 + 2W_0W_2W_3 -
  W_0W_3^2 - 16W_2W_5 
  + 432W_0^2W_6 + 54W_1W_6,\\[.5em]
0=& 40W_0^3W_1 - 20W_0W_1^2 + 4W_0W_2^2 + 10W_0W_2W_3 - 108W_3W_4 +
  432W_0^2W_6 + 216W_1W_6 - W_1W_7,
  \\[.5em]
  0=&
   8W_0^3W_2 + 68W_0W_1W_2 - 6W_0W_1W_3 -
  432W_0^2W_4 
  - 216W_1W_4 - 512W_0^2W_5 - 64W_1W_5  \\[.5em]
  &+ 216W_2W_6 + 108W_3W_6 -
  W_2W_7,
   \\[.5em]
  0=&
   -1280W_0^6 - 10624W_0^4W_1 + 7776W_0^2W_1^2 + 320W_1^3 -
  80W_1W_2^2 + 80W_0^2W_3^2 - 48W_1W_3^2 
  \\[.5em]
  &
  + 3456W_0W_2W_4 - 46656W_4^2 +
  512W_0W_3W_5 - 4096W_5^2 - 34560W_0^3W_6 - 1728W_0W_1W_6 
    \\[.5em]
  &+ 128W_0^3W_7 +
  W_7^2, 
    \\[.5em]
  0=&224W_0^4W_1 + 976W_0^2W_1^2 + 48W_1^3 - 18W_1W_2^2 - 2W_1W_3^2 +
  1080W_0W_2W_4 
    - 11664W_4^2\\[.5em]
  & + 128W_0W_3W_5 - 3456W_0W_1W_6 - 8W_0^3W_7 +
  17W_0W_1W_7 + 54W_6W_7,
   \\[.5em]
  0=& 64W_0^6 + 640W_0^4W_1 + 209W_0^2W_1^2 + 8W_1^3 -
  W_0^2W_2^2 - 4W_1W_2^2 - 4W_0^2W_3^2 + 324W_0W_2W_4 
    \\[.5em]
  &- 2916W_4^2 +
  1728W_0^3W_6 - 108W_0W_1W_6 + 2916W_6^2 - 8W_0^3W_7 + 2W_0W_1W_7, \\[.5em]
  0=&
 -160W_0^6 - 1520W_0^4W_1 - 334W_0^2W_1^2 - 8W_1^3 + W_1W_2^2 + 10W_0^2W_3^2 +
  W_1W_3^2 + 1728W_4W_5   \\[.5em]
  &- 4320W_0^3W_6 - 1188W_0W_1W_6 + 20W_0^3W_7 +
  4W_0W_1W_7, \\[.5em]
  0=& -32W_0^4W_2 - 256W_0^2W_1W_2 - 34W_1^2W_2 + 2W_2^3 +
  16W_0^4W_3 + 176W_0^2W_1W_3 + 28W_1^2W_3 - W_3^3 
    \\[.5em]
  &+ 1728W_0^3W_4 +
  1080W_0W_1W_4 + 2048W_0^3W_5 + 512W_0W_1W_5 - 648W_0W_2W_6 - W_0W_2W_7
    \\[.5em]
  & -
  2W_0W_3W_7 + 54W_4W_7 + 16W_5W_7, \\[.5em]
  0=& 32W_0^4W_2 + 316W_0^2W_1W_2 -
  32W_1^2W_2 + 2W_2^3 + 8W_0^2W_1W_3 + 20W_1^2W_3 - 432W_0W_1W_4
    \\[.5em]
  &
   -
  192W_0W_1W_5 + 1080W_0W_2W_6 + 3456W_5W_6 - 9W_0W_2W_7 + 2W_0W_3W_7 +
  54W_4W_7, \\[.5em]
  0=& -16W_0^4W_2 - 142W_0^2W_1W_2 - 8W_1^2W_2 + W_2^3 - 8W_0^2W_1W_3 +
  2W_1^2W_3 + 1836W_0W_1W_4 
    \\[.5em]
  &
  - 256W_0^3W_5 - 128W_0W_1W_5 - 324W_0W_2W_6 +
  5832W_4W_6
  \\
  \hline
\end{array}
$$
\label{CSC3}
\end{table}

\section{Constructing 21-fold cover of CS surface}\label{sec.Z2}
To construct the Cartwright-Steger surface, we needed to understand the surface $Z_2=\Bb^2/(\Ker(\rho_3)\cap \Pi_2)$.
This surface has invariants
$$
h^{0,1}(Z_2)=7,~h^{0,2}(Z_2)= 27,~K_{Z_2}^2 = 189.
$$
Its automorphism group contains $G\times C_3$ where the quotient by the cyclic group of order three gives $\Bb^2/\Ker(\rho_3)$.
This means, in particular, that we may assume that the $C_3$-invariant part of $H^0(Z_2,K_{Z_2})$ is the pullback\footnote{While there is no map $Z_2\to Z_3$, a rational map suffices here.} of $H^0(Z_3,K_{Z_3})$, whose character we will call $(\chi_2+\chi_4)\otimes 1$. 

The generator of $C_3$ has $63$ fixed points of type $(\frac 13,\frac 13)$. This allows us to calculate the dimensions of the $C_3$-character subspaces of $H^i(Z_2,nK_{Z_2})$ by the Holomorphic Lefschetz Formula on holomorphic vector bundles, or Woods Hole Fixed Point Theorem,  cf. \cite{AB}.  Namely, the alternating sum of traces of the action of the generator of $C_3$ on $H^*(Z_2,\Oo_{Z_2})$ is given by 
$ \frac {63}{(1-w)^2} $
where $w=e^{\frac 23 \pi \ii}$. It equals $\frac {63}{(1-w^2)^2} $ for the square of the generator, so the alternating sums of the dimensions of the character components are
$
\frac 13\Big(21 +  \frac {63}{(1-w)^2} +\frac {63}{(1-w^2)^2}\Big) = 14,$
$\frac 13\Big(21 + w^2 \frac {63}{(1-w)^2} +w\frac {63}{(1-w^2)^2}\Big)= 14$
and $\frac 13\Big(21 + w \frac {63}{(1-w)^2} +w^2\frac {63}{(1-w^2)^2}\Big)=-7$ for the characters $1$, $w$ and $w^2$ respectively. We know the dimensions of the character $1$ components, because they equal those of  $Z_3$. This allows us to prove that
$$
\chi_{H^1(Z_2,K_{Z_2})} = V_7 \otimes w^2, ~ \chi_{H^0(Z_2,K_{Z_2})} =  (\chi_2+\chi_4)\otimes 1+ V_{14}\otimes w
$$
where $V_7$ and $V_{14}$ are some representations of $G$ of dimension $7$ and $14$ respectively, and $w$ and $w^2$ denote the characters of $C_3$. We also recall that there is a subgroup 
$G_{21}\subset G\times C_3$ of order $21$ that acts freely on $Z_2$ such that the Cartwright-Steger surface $Z$ is given by
$$
Z=Z_2/G_{21}.
$$
One can see that the only option is for $G_{21}$ to map trivially to $C_3$. Moreover, the structure of $G$ ensures that the order $3$ elements of $G_{21}$ lie in the conjugacy class $3b$ of Table
\ref{gaptable}. The fact that these elements act with no fixed points allows us to determine that $V_{14}=\chi_6$ and $V_7$ is one of $\chi_1+\chi_2$, $\chi_3$, $\chi_4$, $\chi_5$. \footnote{We believe that  $\Lambda^2 H^0(Z_2,\Omega_{Z_2}) \to H^0(Z_2,K_{Z_2})$ is surjective, which leads to $V_7=\chi_3$, but we will not use it in the paper.}

\begin{remark}
As will be more apparent in a moment, it is difficult to exclude all possibilities in this situation. Therefore, our approach is to find the most plausible scenario, with the eventual goal of successfully verifying the final answer to have the correct numerical invariants.
\end{remark}

Now that we know that 
$$
\chi_{H^0(Z_2,K_{Z_2}) }= (\chi_2+\chi_4)\otimes 1+ \chi_6\otimes w,
$$
we will denote the corresponding $27$ variables by
$$
(x_1:\ldots :x_6 :y_1:\ldots :y_7:z_1:\ldots:z_{14}) 
$$
in a way compatible with Section \ref{sec.Z3}. The action of the generators of the group $G$ on the $z$-variables is given in Table \ref{table.actionZ}.
The action of the generators of $G_{21}$ is given in Table \ref{G21Z}.
We are using the same generators as in Tables \ref{table.action} and \ref{G21XY}.

\begin{table}[tbh]
\caption{Action of $G$ on $(z_1,\ldots, z_{14})$}
$$
\begin{array}{|ll|}
\\[-3em]\hline
(z_1,\ldots,z_{14})&\mapsto (z_1, z_4 - z_5 + z_6, z_1 - z_3, z_2 - z_5 + z_6, z_1 - z_5, z_1 - z_6, \\&
 z_{12} - z_4 + z_5 - z_6, -z_{11} + z_{13}, -z_1 + z_{10} - z_2 + z_3 + z_5 + z_9, \\& -z_{10} +
   z_2 + z_4 - z_5 + z_6, -z_1 - z_{11}, z_2 + z_7, -z_1 - z_{11} + z_8, \\&
 z_{10} + z_{11} + z_{14} - z_2 + z_3)
 \\[.5em]
(z_1,\ldots,z_{14})&\mapsto  ( z_2 + z_3, -z_2 - z_7 + z_9, z_1 + z_2, -z_2 - z_5 - z_7 - z_8, -z_7,  \\&
z_2 + z_6, 
 z_{10} + z_7, -z_{12}, z_{11} + z_7, 
 z_{14} - z_2 - z_7, -z_{12} - z_2 + z_4,  \\&
 -z_8, -z_{12} - z_2 + z_4 - z_5, 
 z_{12} + z_{13} - z_4)
 \\
 \hline
\end{array}
$$
\label{table.actionZ}
\end{table}

\begin{table}[tbh]
\caption{Action of $G_{21}$ on $(z_1,\ldots, z_{14})$}
$$
\begin{array}{|ll|}
\\[-3em]\hline
(z_1,\ldots,z_{14})&\mapsto 
(z_2 + z_3, -z_5 + z_6 - z_8, -z_1 + z_3, z_6 + z_9, z_2 + z_3 + z_7, z_3 
 \\&
 - z_6, z_5 - z_6, -z_5, z_1 + z_{11} + z_{14} - z_3 - z_9, -z_{14} - z_5 + z_6 - z_8 
  \\&
 + z_9, z_{12} - z_3 - z_4, z_{10} - z_2 + z_9, -z_3 - z_4, z_1 + z_{13} + z_{14} - z_9)
 \\[.5em]
(z_1,\ldots,z_{14})&\mapsto (z_{11} + z_{14} - z_3 - z_4 + z_6 - z_9, -z_1 - z_{10} - z_{13} - z_{14} + z_4
 \\&
  + z_5 - z_7 +
   z_9, z_{12} + z_{14} - z_3 - z_4 + z_5 + z_8 - z_9, -z_{10} - z_{13} 
    \\&
    - z_{14} + z_2 + 
  z_4 + z_9, z_{11} - z_{13} - z_3 + z_6, -z_3 - z_4 - z_9, 
 z_1 + z_{10} 
  \\&
  + z_{13} - z_4 - z_5 - z_8, -z_1 - z_{11} + z_5 + z_8, 
 z_{13} + z_3 - z_5 - z_8, -z_1
  \\&
   - z_{10} - z_{11} - z_{13} - z_{14} + z_4 + z_5 - z_7 + z_8 +
   z_9, -z_{11} + z_4 + z_8, 
    \\&
    -z_{11} - z_6 + z_8, -z_{11} - z_{14} + z_4 + z_9, 
 z_{11} + z_3 - z_5 - 2 z_8)
 \\
 \hline
\end{array}
$$
\label{G21Z}
\end{table}

We now study the quadratic relations on $(x_1,\ldots,z_{14})$. The relations among $(x_1,\ldots,y_7)$ are well understood from Section \ref{sec.Z3}.
As before, the Holomorphic Lefschetz Formula and Kodaira Vanishing Theorem show that the dimension of $w$ and $w^2$ {eigenspaces} of $H^0(Z_2,2K_{Z_2})$ are $77$ each.
For example, we have 
$$
\frac 13\Big(210 + w^2 \frac  {63 w}{(1 - w)^2} + w \frac {63 w^2}{(1 - w^2)^2} \Big)=77.
$$
We have 
$$
\Sym^2(\chi_6) = \chi_1 + \chi_6 + \chi_7 + \chi_8 + \chi_9 + \chi_{10}
$$
and we make a \emph{guess} that the space of relations has the expected dimension $105-77=28$. This leads to $\chi_1+\chi_{10}$ as relations. Magma calculation shows
that the corresponding equations on $(z_1:\ldots:z_{14})$ cut out a dimension $5$ subscheme in $\Cc\Pp^{13}$. 

\smallskip
The next key idea comes from looking at the relations on $\Big((\chi_2 +\chi_4)\otimes 1 \Big)\otimes \Big(\chi_6\otimes w\Big)$ which are linear combinations of $x_iz_j$ and $y_iz_j$.
These are harder to guess but we only needed a partial guess. We have
$$
\chi_2 \chi_6 = \chi_2 + \chi_4 + \chi_5 + \chi_{13} + \chi_{14} ,~~
\chi_4 \chi_6 = \chi_2 + \chi_{11} + \chi_{13} + \chi_{14 }
$$
and we need to find a space of relations of dimension at least $(6+7)14 - 77=105$. It appeared reasonable to expect some kind of diagonal relation of the character type $\chi_2$.
However,  in contrast to Section \ref{sec.Z3} we can no longer pick this diagonal element randomly! We calculated to high precision several points of $Z_3$ (i.e. values of $x_i$ and $y_j$)
and then used Mathematica to find the diagonal relation of type $\chi_2$ which was a linear combination of $x_iz_j$ and $y_iz_j$ such that the quadratic relations on $z_k$ have a solution.
There were two such diagonal relations, which differ by a sign. We get to pick one of them, which breaks the additional automorphism of Remark \ref{rem.involution}.

\smallskip
Note that given $(x_1,\ldots,y_7)$, the above procedure determines $(z_1,\ldots, z_{14})$ uniquely up to scaling. To get the desired (birational) triple cover we also impose the additional condition
$$
{\rm Cubic}_y= {\rm Cubic}_z
$$
where ${\rm Cubic}_y$ and ${\rm Cubic}_z$ are the $G$-invariant cubic equations on $y_i$ and $z_j$ respectively. 
They are unique up to scaling and we are free to make a choice. The two are proportional because they represent non-zero invariant sections of $3K$ on $Z_3$; that space can be seen to be one-dimensional by a character calculation.

\smallskip
As the result, we were able to construct, with very high precision, multiple points in $\Cc\Pp^{26}$ which come from $Z_2$. We don't claim that $Z_2$ is embedded into $\Cc\Pp^{26}$ by the global sections of the canonical divisor, although we consider it likely.

\begin{remark}
We can then use these points to determine the quadratic relations on 
$x_i,y_j,z_k$. We have found the space of these relations to be of expected dimension $105$. We expect that some degree three equations would also be needed to generate the homogeneous ideal of $Z_2$. However, since our focus lies in constructing the CS surface, we are not particularly interested in $Z_2$ itself. 
\end{remark}

\section{Constructing CS surface}\label{sec.Z}
To understand how the $CS$ surface $Z$ is constructed from $Z_2$, recall that it is given as $Z_2/G_{21}$. The group $G_{21}$ was found explicitly. Then we found $G_{21}$-invariant quadratic polynomials in the $27$ variables $x_i,y_j,z_k$. These form a dimension ten subspace which splits according to the number of $z_k$ into spaces of dimension $4$, $3$ and $3$ for no $z$, one $z$ and two $z$'s. This is precisely the decomposition of $H^0(Z,K_Z)$ into character subspaces of $C_3$ action. 

\smallskip
We picked a basis $U_0,\ldots,U_{9}$ and then we use the numerically constructed points of $Z_2$ to find the polynomial relations on $U_i$.
Specifically, we solve for quadratic and then cubic equations in $U_i$ which are numerically zero on the  points of $Z_2$. This gives approximate linear relations on the coefficients, which are then solved by Mathematica. Since all computations are performed with interval arithmetic, it is possible to calculate the ranks of the matrices in question correctly. At the end of the day, we are able to recognize the coefficients as algebraic numbers.

\begin{remark}\label{mistake}
It is not at all a trivial matter to find a basis $\{U_i\}$ in which relations look nice. Remarkably, after making several informed choices, described below, we were able to find a basis in which the relations have \emph{rational} coefficients! This means that, contrary to expectations, the Cartwright-Steger surface is unique, rather than a pair of surfaces that differ by complex conjugation. The expectation comes from the fact that for fake projective planes, complex conjugation gives rise to holomorphically different surfaces as proved in \cite{KK}.  The paper \cite{Y2} gives an attempt to prove such an expectation in page 1146, where an error was made stating that the oriented cover of $\Rr\Pp^2\sharp T^2$ was
$T^2$. Our results here disprove such an expectation.
\end{remark}

\begin{remark}
We unexpectedly found a quadratic relation on $U_i$. This means that the map
$$
\Sym^2(H^0(Z,2K_Z))\to H^0(Z,4K_Z)
$$
of spaces of dimension $55$ each has a non-trivial kernel. There is an $84$-dimensional space of cubic relations given by the kernel of 
$$\Sym^3(H^0(Z,2K_Z))\to H^0(Z,6K_Z),
$$ as one would expect from the dimension count.
\end{remark}

\begin{remark}
We explain how the choices were made.  
The action of $C_3$ on $Z$ has $9$ fixed points. We picked the basis elements $U_7,U_8,U_9$ so that the $(\frac 13,\frac 13)$ fixed points are $(0:\ldots:0:0:1)$, $(0:\ldots:0:1:0)$, and $(0:\ldots:1:0:0)$, which determines them uniquely up to scaling. The $U_4,U_5,U_6$ are determined by looking at the tangent spaces at these points. The sections $U_0,U_1,U_2,U_3$ are $W_0^2,W_1,W_2,W_3$ from Section \ref{sec.CSC3}, up to scaling. These form the invariant subspace of the $C_3$ action. The other two eigenspaces of the action are $\rm{Span}(U_4,U_5,U_6)$ and $\rm{Span}(U_7,U_8,U_9)$.
\end{remark}

\begin{remark}
We have not been able to reduce the equations of CS surface $Z$ to a form suitable for a printed version, since the coefficients are often about ten decimal digits long (which is better than the initial attempts that led to $40$ digits long Gaussian integers). We make the equations available at \cite{LBhome}.
\end{remark}

\section{Verification}\label{sec.verify}
In this section we comment on the collection of facts that we verified by computer calculations that imply that the surface we constructed is indeed the Cartwright-Steger surface. We have to be fairly creative, since many desirable computations take too long to finish.

\smallskip
We begin by observing that the scheme cut out by the 84 cubic equations is connected and smooth of dimension two. Connectedness follows from a Magma calculation
that the quotient ring is an integral domain, which was done with rational coefficients.
To prove smoothness, we calculated minors of Jacobian matrices of appropriately chosen 
subsets of the equation set (a finite field calculation suffices by semicontinuity). 
Specifically, we looked at the fixed points of $C_3$ action on $Z$ and at each of them found $7$ relations whose gradients are linearly independent at 
that point. Then we considered the size $7$ minors of the Jacobian matrices of these sets of equations. Magma calculation shows that they have no common zeros on $Z$. 

\smallskip
Thus we have a smooth surface $Z$ \footnote{We use the same name as for the CS surface, but we are still to prove that $Z$ is one.} which is embedded into $\Pp^9$ by global sections of a divisor $D$. By Hilbert polynomial calculation and Riemann-Roch theorem,
we see that 
\begin{equation}\label{DZ}
D^2=36,~K_Z D = 18,~\chi(\Oo_Z) = 1.
\end{equation}

\smallskip
The equation $U_0=0$ gives a divisor $2C$ where $C$ is the curve\footnote{We expected that $C$ is the zero locus $E_3$ of the section of the canonical divisor on $Z$. Our calculations below verify it.} whose ideal in $\Cc\Pp^9$ 
is generated by  $U_0$ and eighteen quadratic polynomials in Table \ref{tableU0} below.
\begin{table}[tbh]
\caption{Equations of the $C\subset Z$}
$$\small
\begin{array}{|l|}
\\[-3em]
\hline
U_0, -8 U_1^2 - 2 U_1 U_2 + 2 U_2^2 - 2 U_1 U_3 + 3 U_4 U_9 + 6 U_6 U_9,
 16 U_1^2 + 4 U_2^2 - 4 U_3^2 
 + 3 U_6 U_7, 
 \\
 20 U_1^2 + 12 U_1 U_2 
 + 8 U_1 U_3 + 2 U_3^2 +
  U_5 U_9,
  -4 U_1^2 - 8 U_1 U_2 + 6 U_2^2
   - 4 U_1 U_3 + U_5 U_8,
   \\
 4 U_1^2 + 2 U_1 U_2 - 2 U_2^2 - 2 U_1 U_3 + 2 U_3^2 
 + 3 U_4 U_8,
  -2 U_1^2 + U_2 U_3,
  \\
 -6 U_1 U_4 - 6 U_3 U_4 
 - U_3 U_5 + 12 U_1 U_6 - 12 U_2 U_6 + 6 U_8 U_9,
\\ -36 U_1 U_4 - 12 U_3 U_4 
 - 4 U_1 U_5 - 2 U_2 U_5
 - 4 U_3 U_5 + 24 U_1 U_6 + 3 U_7 U_9,
 \\
 36 U_1 U_4 + 2 U_2 U_5 + 4 U_3 U_5 - 24 U_1 U_6 - 24 U_2 U_6 
 + 3 U_7 U_8,
 \\
 6 U_1 U_4 + 6 U_3 U_4 + U_3 U_5 + 12 U_3 U_6, 
 6 U_1 U_4 + 3 U_2 U_4 + U_1 U_5 + 12 U_1 U_6,
 \\
 18 U_6^2 - 2 U_1 U_8
  + 3 U_2 U_8 - 6 U_1 U_9 + 4 U_2 U_9 + 3 U_3 U_9,
 U_5 U_6 + 2 U_1 U_8 + 2 U_1 U_9 + 2 U_2 U_9,
 \\
 U_5^2 + 12 U_1 U_7 + 3 U_2 U_7 
 + 6 U_3 U_7 -
  16 U_1 U_8 + 48 U_1 U_9 - 4 U_2 U_9,
  \\ 9 U_4 U_6 + 4 U_1 U_8 + 6 U_1 U_9 - 5 U_2 U_9 -
  3 U_3 U_9,
  \\
   3 U_4 U_5 - 3 U_1 U_7 
  - 3 U_3 U_7 - 8 U_1 U_8 - 36 U_1 U_9 + 4 U_2 U_9,
  \\
 18 U_4^2 + 3 U_3 U_7 - 16 U_1 U_8 + 8 U_2 U_9 + 12 U_3 U_9,
 2 U_1 U_8 + U_3 U_8 
 + 2 U_1 U_9 - U_2 U_9
 \\\hline
\end{array}
$$
\label{tableU0}
\end{table}

These equations were found using Mathematica. Once found, we can verify inclusion of ideals by Hilbert polynomial computations, which are readily performed in Magma 
with rational coefficients. We verified that $C$ is an integral (but perhaps singular) curve by verifying that the quotient by its ideal is a domain. 

\smallskip
We now consider the short exact sequence of sheaves 
\begin{equation}\label{ses}
0\to \Oo_Z(-C)(1) \to \OO_Z (1) \to (j_C)_*\Oo_C(1)\to 0.
\end{equation}
We verified by Macaulay2 that $\dim H^1( Z,(j_C)_*\Oo_C(1)) = 1$ while working with rational coefficients.
We have also verified that $H^1(Z,\OO_Z(1))=0$ by a Macaulay2 calculation modulo $101$ (calculations over rationals are more complicated
and appear to be outside of the software and hardware capabilities), which suffices by semicontinuity. 
The long exact sequence in cohomology coming from \eqref{ses} then implies that $h^2(Z,\Oo(-C)(1)) > 0$. Since $2C=D$, this means
$h^2(Z,\Oo(C))>0$, so $H^0(Z,\Oo(K_Z-C))$ is non-zero.
From $2C=D$ and \eqref{DZ}, we see that $CD = K_ZD=18$. Since $D$ is (very) ample, we see that $K_Z-C$ is a degree zero effective divisor.
Therefore, we must have $K_Z-C=0$.
Then we get $K_Z^2=9$, $h^0(K_Z)\geq 1$, which means that $Z$ is a Cartwright-Steger surface. We also see that $D=2C=2K_Z$, so the embedding
is given by $2K_Z$.

\smallskip
\begin{remark}\label{gap}
It is stated that $2K_Z$ is very ample in \cite{Y3}, but the details in the elimination of a potential curve $B$ satisfying Proposition~1-(i) in \cite{Y3} are overlooked, 
as kindly informed by JongHae Keum to the second author.  The
current paper gives a stronger version of the statement.
\end{remark}


\end{document}